\documentclass[11pt,a4paper,reqno]{amsart}

\usepackage[T1]{fontenc}
\usepackage[utf8]{inputenc}
\usepackage{lmodern}
\usepackage{microtype}
\usepackage[
  a4paper,
  textwidth=15.2cm,
  textheight=23.2cm,
  centering,
  headheight=14pt
]{geometry}
\usepackage{mathrsfs}
\usepackage{amsmath,amssymb,amsthm}
\usepackage{enumerate}

\usepackage[
  colorlinks=true,
  linkcolor=blue,
  citecolor=magenta,
  urlcolor=blue
]{hyperref}

\newcommand{\TT}{{\mathcal  T}}

\def\XXint#1#2#3{{\setbox0=\hbox{$#1{#2#3}{\int}$ }
\vcenter{\hbox{$#2#3$ }}\kern-.6\wd0}}

\newcommand{\essinf}{\mathop{\mathrm{ess\,inf}}}
\newcommand{\esssup}{\mathop{\mathrm{ess\,sup}}}

\newtheorem{theorem}{\bf Theorem}[section]
\newtheorem{proposition}[theorem]{\bf Proposition}
\newtheorem{lemma}[theorem]{\bf Lemma}

\theoremstyle{definition}
\newtheorem{definition}[theorem]{Definition}

\numberwithin{equation}{section}

\begin{document}

\title[A Quantitative Characterization of the Mokobodzki Condition]{A Quantitative Characterization of the Mokobodzki Condition}

\author[T. Klimsiak]{Tomasz Klimsiak}
\author[M. Rzymowski]{Maurycy Rzymowski}

\thanks{This work was supported by the Polish National Science Centre
(Grant No.~2022/45/B/ST1/01095).}

\address{
Faculty of Mathematics and Computer Science,
Nicolaus Copernicus University in Toru\'n,
Chopina 12/18,
87-100 Toru\'n,
Poland
}

\email{tomas@mat.umk.pl}
\email{maurycyrzymowski@mat.umk.pl}

\subjclass[2020]{
Primary 60H10;
Secondary 60G40 ; 60G07}

\keywords{Mokobodzki condition; semimartingale; c\`adl\`ag barriers; mean variation; reflected BSDE}

\begin{abstract}
We establish a quantitative characterization of the Mokobodzki condition for two 
adapted c\`adl\`ag barriers on an arbitrary filtered probability space. 
For every $p\geq1$, we introduce a mean co-variation functional $Var_p(L,U)$, 
which for $p=1$ and $L=U$ reduces to Rao's mean variation, 
and show that it is quantitatively equivalent to the minimal $\underline H^p$-norm among 
all semimartingales lying between the barriers. The result covers both the case $p>1$ 
and the endpoint $p=1$, where the natural scale is based on Doob's class $(D)$.

\end{abstract}

\maketitle

\section{Introduction}

The Mokobodzki condition plays a fundamental role in stochastic problems
with two constraints, in particular in the theory of doubly reflected
backward stochastic differential equations and Dynkin games. For two ordered
barriers $L\leq U$, it requires the existence of a semimartingale belonging
to a suitable integrability class and lying between them. Although the
condition is widely used, criteria for its validity formulated directly in
terms of the barriers are scarce.

In this note, we give a quantitative characterization of the
$\underline H^p$-Mokobodzki condition for adapted c\`adl\`ag barriers on an
arbitrary filtered probability space. More precisely, we characterize the
existence of a c\`adl\`ag semimartingale $X\in\underline H^p$ such that
$L\leq X\leq U$, and derive two-sided estimates for the infimum of the
$\underline H^p$-norms of all such semimartingales.
The space $\underline H^p$ consists of c\`adl\`ag semimartingales $X$
with canonical decomposition
\[
X=X_0+A+M,
\]
where $A$ is a predictable finite-variation process and $M$ is a local
martingale, for which
\[
\|X\|_{\underline H^p}
:=
\|X\|_{\mathcal S^p}
+\big(\mathbb E|A|_T^p\big)^{1/p}<\infty,
\qquad p>1.
\]
For $p=1$, we use
\[
\|X\|_{\underline H^1}
:=
\|X\|_{\mathcal D^1}+\mathbb E|A|_T,
\]
where $\mathcal D^1$ is the space of all processes of class \textnormal{(D)}.
To this end, for $p\geq1$, we introduce the mean co-variation functional
\begin{equation}
\label{eq.func1}
Var_p(L,U):=
\sup_{\pi}\mathbb E
\left(
\sum_{i=0}^{n-1}
\left\{
\left[
\mathbb E^{\mathcal F_{\tau_i}}
(L_{\tau_{i+1}}-U_{\tau_i})
\right]^+
+
\left[
\mathbb E^{\mathcal F_{\tau_i}}
(U_{\tau_{i+1}}-L_{\tau_i})
\right]^-
\right\}
\right)^p,
\end{equation}
where the supremum is taken over all finite random partitions
$\pi=\{0=\tau_0\leq\cdots\leq\tau_n=T\}$ consisting of stopping times.
When $L=U$ and $p=1$, this is the stopping-time counterpart of Rao's
mean variation.
Our main result shows that, for $p>1$,
\[
\inf_{L\leq X\leq U}\|X\|_{\underline H^p}
\asymp_p
\|L^+\|_{\mathcal S^p}
+\|U^-\|_{\mathcal S^p}
+[Var_p(L,U)]^{1/p},
\]
where the infimum is taken over all c\`adl\`ag semimartingales $X$ lying
between $L$ and $U$. Thus, the two sides are simultaneously finite and comparable 
up to constants depending only on $p$. An analogous
result holds for $p=1$, with
\[
\|L^+\|_{\mathcal D^1}
+\|U^-\|_{\mathcal D^1}
+Var_1(L,U)
\]
on the right-hand side.
Two useful consequences follow immediately. First, given c\`adl\`ag barriers
$L\leq U$, with $L^+,U^-\in\mathcal S^p$ for $p>1$, or
$L^+,U^-\in\mathcal D^1$ for $p=1$, we have
\[
\text{there exists }X\in\underline H^p\text{ such that }L\leq X\leq U
\quad\Longleftrightarrow\quad
Var_p(L,U)<\infty.
\]
Second, by taking $L=U=X$, we obtain a characterization of
$\underline H^p$-semimartingales in the spirit of Rao. For $p>1$, a
c\`adl\`ag adapted process $X\in\mathcal S^p$ belongs to
$\underline H^p$ if and only if
\[
Var_p(X):=Var_p(X,X)<\infty.
\]
In the case  $p=1$, a c\`adl\`ag adapted process $X$ of class
\textnormal{(D)} belongs to $\underline H^1$ if and only if
$Var_1(X)<\infty$.
The main result, however, contains more information than these qualitative
characterizations: it provides two-sided estimates for the infimum of the
$\underline H^p$-norms of semimartingales lying between $L$ and $U$.
This optimal norm is therefore determined, up to constants depending only
on $p$, by quantities expressed directly in terms of the barriers.

Our results recover and extend the estimates of Pham and Zhang \cite{PZ}.
In the case $L=U$, $p=2$, and a Brownian filtration $\mathbb F$, 
our main theorem recovers \cite[Theorems 2.6 and 2.7]{PZ}. Moreover, when combined with
standard existence results for doubly reflected BSDEs, its specialization
to a Brownian filtration and $p=2$ yields \cite[Theorem 3.4]{PZ}. In fact,
we obtain a sharper version of that result, since our estimate is expressed
in terms of $Var_p(L,U)$, whereas \cite{PZ} uses the more involved
functional $Var_p^*(L,U):=Var_p(L\vee L_-,U\wedge U_-)$, and
\[
Var_p(L,U)\leq Var_p^*(L,U).
\]
The proof of \cite[Theorem 3.4]{PZ} contains a gap for c\`adl\`ag barriers:
the argument applies when the finite-variation reflection term is
continuous and therefore, as written, establishes the result only for
continuous barriers. 
Our proof covers general c\`adl\`ag barriers and arbitrary filtrations for
every $p\geq1$. It relies on the theory of linear reflected BSDEs and on a 
lower-semicontinuity variant of the Barlow–Protter convergence theorem for semimartingales \cite{BarlowProtter1990}.

\section{Setting and notation}\label{sec.setting.var}

Let $(\Omega,\mathcal F,\mathbb F,\mathbb P)$ be a  filtered probability space  
satisfying the usual conditions.  Fix $T>0$.  Denote by
$\mathcal T$ the set of all stopping times with values in $[0,T]$.  
For $p>1$, we let $\mathcal S^p$ denote the space of all c\`adl\`ag $\mathbb F$-adapted processes
$X$ with finite norm 
\[
\|X\|_{\mathcal S^p}:=\left(\mathbb E\sup_{t\le T}|X_t|^p\right)^{1/p}.
\]
$\mathcal D^1$ denotes the space of all c\`adl\`ag $\mathbb F$-adapted processes
$X$ of class (D) (i.e. $\{X_\tau:\tau\in\mathcal T\}$ is uniformly integrable) equipped with the  norm 
\[
\|X\|_{\mathcal D^1}:=\sup_{\tau\in\mathcal T}\mathbb E|X_\tau|.
\]
$L^p$ stands for the set of all $\mathcal F_T$-measurable random variables $\eta$
with  $\|\eta\|_{L^p}:= (\mathbb E|\eta|^p)^{1/p}<\infty$.  For a given finite-variation $\mathbb F$-adapted c\`adl\`ag
process $V$ we write $V:=V^+-V^-$ for  its Jordan decomposition and we let $|V|_t:=V^+_t+V^-_t,\, t\in [0,T]$.
In what follows, whenever we say that  a  c\`adl\`ag semimartingale  $X$ has the  canonical decomposition 
\begin{equation}
\label{eq.semcd}
X_t=X_0+A_t+M_t,\quad t\in [0,T],
\end{equation}
it means that the middle term $A$ on the right-hand side  is a predictable c\`adl\`ag  
 process of finite variation, $A_0=0$, and the rightmost term $M$ is a local 
martingale with $M_0=0$. 
Let us note that the  mean co-variation  functional \eqref{eq.func1} is monotone under enlargement of the 
interval between the barriers: if $L^1\le L\le U\le U^1$, then
\begin{equation}\label{eq.varmon}
Var_p(L^1,U^1)\le Var_p(L,U).
\end{equation}
For the proof of the main result of the paper we use the notion of  reflected backward stochastic differential equations
with zero generator. 

In the remainder of the paper, unless stated otherwise, $L,U$ are  $\mathbb F$-adapted c\`adl\`ag  processes 
satisfying  $L_t\le U_t,\, t\in [0,T]$, and  $\xi$ is an $\mathcal F_T$-measurable random variable
such that $L_T\le\xi\le U_T$.

\begin{definition}
\label{def.drbsde}
We say that a triple $(Y,M,R)$ of $\mathbb{F}$-adapted processes is a solution to  
a reflected backward stochastic differential equation
on the interval $[0,T]$ with  terminal value 
$\xi$, lower barrier $L$ and upper barrier $U$ (RBSDE$(\xi,L,U)$ for short) if
\begin{enumerate}
\item[(a)] $Y$ is a c\`adl\`ag process and $M$ is a local martingale with $M_0=0$,
\item[(b)] $R$ is a c\`adl\`ag finite-variation predictable process with $R_0=0$,
$L_t\le Y_t\le U_t$, $t\in[0,T]$, and
\begin{equation*}
\begin{split}
&\int^{T}_{0}(Y_{r-}-L_{r-})\,dR^{+}_r=\int^{T}_{0}(U_{r-}-Y_{r-})\,dR^{-}_r=0,
\end{split}
\end{equation*}
where $R=R^{+}-R^{-}$ is the Jordan decomposition of $R$,
\item[(c)] $Y_t=\xi+(R_T-R_t)-(M_T-M_t)$, $t\in[0,T]$.
\end{enumerate}
\end{definition}

In what follows we refer to condition (b) as the {\em minimality condition}.
We shall  also consider Reflected BSDEs with one  lower (resp. upper) barrier.

\begin{definition}
We say that a triple $(Y,M,K)$ of $\mathbb{F}$-adapted processes is a solution to  a
reflected backward stochastic differential equation
on the interval $[0,T]$ with  terminal value $\xi$ and lower barrier $L$ (\underline{R}BSDE$(\xi,L)$ for short) if
\begin{enumerate}
\item[(a)] $Y$ is a c\`adl\`ag process and $M$ is a local martingale with $M_0=0$,
\item[(b)] $K$ is a c\`adl\`ag increasing  predictable process with $K_0=0$, $L_t\le Y_t$, $t\in[0,T]$, and
\begin{equation*}
\begin{split}
\int^{T}_{0}(Y_{r-}-L_{r-})\,dK_r=0,
\end{split}
\end{equation*}
\item[(c)] $Y_t=\xi+(K_T-K_t)-(M_T-M_t)$, $t\in[0,T]$.
\end{enumerate}
\end{definition}

\begin{definition}
We say that a triple $(Y,M,K)$ of $\mathbb{F}$-adapted processes is a solution to  a reflected 
backward stochastic differential equation on the interval $[0,T]$ with  terminal value $\xi$ and upper  barrier $U$ 
($\rm{\overline{R}}$BSDE$(\xi,U)$ for short) 
if $(-Y,-M,K)$ is a solution to \underline{R}BSDE$(-\xi,-U)$.
\end{definition}

\section{Preliminary estimates}\label{sec.prelim.var}

\begin{lemma}
\label{24marca1}
Let $(Y,M,R)$ be a solution to \textnormal{RBSDE}$(\xi,L,U)$.
\begin{enumerate}
\item[(i)] If  $Y$ is of class \textnormal{(D)}, then
\begin{equation}\label{24marca2}
\|Y\|_{\mathcal{D}^1}\le \mathbb E|\xi|+\|L^+\|_{\mathcal{D}^1}+\|U^-\|_{\mathcal{D}^1},
\end{equation}
\item[(ii)] If   $Y$ is of class \textnormal{(D)} and  $p>1$, then  there exists $c_p>0$, 
depending only on $p> 1$, such that
\begin{equation}\label{24marca3}
\|Y\|_{\mathcal{S}^p}\le c_p (\|\xi\|_{L^p}+\|L^+\|_{\mathcal{S}^p}+\|U^-\|_{\mathcal{S}^p}).
\end{equation}
\end{enumerate}
\end{lemma}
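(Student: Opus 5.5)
The plan is to bound $Y$ from above and from below at an arbitrary stopping time $\tau\in\mathcal T$ by conditional expectations of quantities built from $\xi$, $L^+$ and $U^-$, using the minimality condition. Both parts then follow: (i) by taking expectations, (ii) from Doob's maximal inequality.

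\textbf{Upper bound.} Fix $\tau\in\mathcal T$ and $\varepsilon>0$. Define
\[
\sigma:=\inf\{t\ge\tau:\ Y_t\le L_t+\varepsilon\}\wedge T .
\]
On $[\tau,\sigma)$ we have $Y_{r-}-L_{r-}\ge\varepsilon$, except possibly at the left endpoint $r=\tau$. By the minimality condition $dR^+$ therefore charges no point of $(\tau,\sigma]$, so on this interval $R$ is nonincreasing. The jump at $\tau$ itself is irrelevant because only $R_\sigma-R_\tau$ enters. Taking a localizing sequence $\rho_k$ for $M$, we get
\[
Y_\tau=Y_{\sigma\wedge\rho_k}+(R_{\sigma\wedge\rho_k}-R_\tau)-(M_{\sigma\wedge\rho_k}-M_\tau)\le Y_{\sigma\wedge\rho_k}-(M_{\sigma\wedge\rho_k}-M_\tau)
\]
on $\{\tau\le\rho_k\}$. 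Conditioning gives $Y_\tau\le\mathbb E^{\mathcal F_\tau}Y_{\sigma\wedge\rho_k}$. Since $Y$ is of class (D), we may let $k\to\infty$ and obtain $Y_\tau\le\mathbb E^{\mathcal F_\tau}Y_\sigma$.

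At $\sigma$ one of two things happens. Either $\sigma<T$, and then by right continuity $Y_\sigma\le L_\sigma+\varepsilon\le L^+_\sigma+\varepsilon$. Or $\sigma=T$, and then $Y_T=\xi$. Hence
\[
Y_\tau\le\mathbb E^{\mathcal F_\tau}\bigl(|\xi|+L^+_\sigma\bigr)+\varepsilon .
\]

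\textbf{Lower bound.} Symmetrically, using $R^-$ and the barrier $U$,
\[
Y_\tau\ge-\mathbb E^{\mathcal F_\tau}\bigl(|\xi|+U^-_{\sigma'}\bigr)-\varepsilon
\]
for a stopping time $\sigma'\ge\tau$.

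\textbf{Part (i).} Combining the two bounds,
\[
|Y_\tau|\le\mathbb E^{\mathcal F_\tau}\bigl(|\xi|+L^+_\sigma+U^-_{\sigma'}\bigr)+\varepsilon .
\]
(Note that $Y_\tau^+\le(\mathbb E^{\mathcal F_\tau}(|\xi|+L^+_\sigma)+\varepsilon)$ and $Y_\tau^-\le(\ldots)$, and only one of the two is nonzero.) Taking expectations and then $\varepsilon\downarrow0$ gives $\mathbb E|Y_\tau|\le\mathbb E|\xi|+\|L^+\|_{\mathcal D^1}+\|U^-\|_{\mathcal D^1}$. The supremum over $\tau$ is \eqref{24marca2}.

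\textbf{Part (ii).} Bound $L^+_\sigma\le\sup_t L^+_t$ and $U^-_{\sigma'}\le\sup_tU^-_t$. Then
\[
|Y_\tau|\le N_\tau+\varepsilon,\qquad N_t:=\mathbb E^{\mathcal F_t}\Bigl(|\xi|+\sup_{s\le T}L^+_s+\sup_{s\le T}U^-_s\Bigr),
\]
where $N$ is a càdlàg martingale. Applying this at the deterministic times $\tau=t$ gives $|Y_t|\le N_t$ a.s. for every $t$. By right continuity this holds simultaneously for all rational $t$ and then for all $t$, so $\sup_t|Y_t|\le\sup_tN_t$. Doob's $L^p$ inequality yields
\[
\|Y\|_{\mathcal S^p}\le\tfrac{p}{p-1}\bigl(\|\xi\|_{L^p}+\|L^+\|_{\mathcal S^p}+\|U^-\|_{\mathcal S^p}\bigr),
\]
so we can take $c_p=p/(p-1)$.

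\textbf{Main obstacle.} The delicate step is the justification that $R$ is nonincreasing on $(\tau,\sigma]$. The minimality condition is stated with left limits $Y_{r-}-L_{r-}$, so I must check that $Y_{r-}-L_{r-}\ge\varepsilon$ for $r\in(\tau,\sigma]$. This holds because $Y-L>\varepsilon$ on $[\tau,\sigma)$, and taking left limits preserves the bound $\ge\varepsilon$. The jump of $R^+$ at $\sigma$ is then also excluded. The remaining subtlety is the exceptional set $\{Y_\tau\le L_\tau+\varepsilon\}$, where $\sigma=\tau$. There the bound $Y_\tau\le L^+_\tau+\varepsilon$ holds trivially, so no further argument is needed.
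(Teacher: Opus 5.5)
Your proof is correct, but it takes a different route from the paper.

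\textbf{The paper's route.} The paper never works with $Y$ directly. It introduces the class (D) solutions $\underline Y$ of \underline{R}BSDE$(\xi,L)$ and $\bar Y$ of $\overline{\rm R}$BSDE$(\xi,U)$, whose existence comes from the authors' earlier work. It then applies a comparison theorem to get $\bar Y\le Y\le\underline Y$. Finally it uses the Snell envelope representation of $\underline Y$ and $\bar Y$ to obtain
\[
|Y_\alpha|\le\esssup_{\tau\ge\alpha}\mathbb E(L^+_\tau|\mathcal F_\alpha)+\esssup_{\tau\ge\alpha}\mathbb E(U^-_\tau|\mathcal F_\alpha)+\mathbb E(|\xi||\mathcal F_\alpha),
\]
splitting $\xi=\xi^+-\xi^-$ between the two one-barrier bounds. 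Parts (i) and (ii) then follow by taking expectations, using the lattice property of the Snell envelope, and by Doob's inequality, exactly as in your final steps.

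\textbf{Your route.} Your $\varepsilon$-approach stopping time $\sigma$, combined with the minimality condition and optional stopping, proves by hand the half of the comparison you actually need:
\[
Y_\tau\le\mathbb E^{\mathcal F_\tau}\bigl(L_\sigma\mathbf 1_{\{\sigma<T\}}+\xi\mathbf 1_{\{\sigma=T\}}\bigr)+\varepsilon\le\underline Y_\tau+\varepsilon .
\]
You never construct $\underline Y$ or $\bar Y$. The supporting details are sound:
\begin{itemize}
\item the left limits satisfy $Y_{r-}-L_{r-}\ge\varepsilon$ on $(\tau,\sigma]$, so $R^+_\sigma=R^+_\tau$;
\item the localization is removed legitimately because $Y$ is of class (D);
\item right continuity gives $Y_\sigma\le L_\sigma+\varepsilon$ on $\{\sigma<T\}$;
\item the observation that only one of $Y^+_\tau,Y^-_\tau$ is nonzero avoids a factor $2$ in front of $\mathbb E|\xi|$, giving the same constant in (i) as the paper.
\end{itemize}

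\textbf{Comparison.} The paper's argument is shorter given the cited machinery, and it records the structural fact that $Y$ is sandwiched between the two optimal-stopping values. Yours is self-contained: it needs no existence or comparison theory for one-barrier equations, only the definition of a solution. It also yields the explicit constant $c_p=p/(p-1)$.
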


\begin{proof}
Let $(\underline{Y},\underline{Z},\underline{K})$, $(\bar{Y},\bar{Z},\bar{A})$ 
denote the solutions of $\text{\underline{R}}$BSDE$^T(\xi,0,L)$, 
$\rm{\overline{R}}$BSDE$^T(\xi,0,U)$, respectively, such that 
$\underline{Y}$ and $\bar{Y}$ are of class (D) (see \cite[Theorem 2.13]{K:SPA1}). 
Then, by \cite[Proposition 2.1]{K:SPA1},  we have
\begin{equation}\label{24marca4}
\bar{Y}\le Y\le \underline{Y}.
\end{equation}
Moreover, by the Snell envelope representation (see \cite[Corollary 2.9]{K:SPA1}),
for any $\alpha\in\TT$,
\begin{equation}\label{24marca6}
\underline{Y}_{\alpha}
=\esssup_{\tau\ge\alpha}\mathbb{E}(L_{\tau}\mathbf1_{\{\tau<T\}}+\xi\mathbf1_{\{\tau=T\}}|\mathcal{F}_{\alpha})
\le \esssup_{\tau\ge\alpha}\mathbb{E}(L^+_{\tau}+\xi^+|\mathcal{F}_{\alpha}),
\end{equation}
and analogously
\[
\bar{Y}_{\alpha}=\essinf_{\tau\ge\alpha}\mathbb{E}(U_{\tau}\mathbf1_{\{\tau<T\}}
+\xi\mathbf1_{\{\tau=T\}}|\mathcal{F}_{\alpha})
\ge -\esssup_{\tau\ge\alpha}\mathbb{E}(U^-_{\tau}+\xi^-|\mathcal{F}_{\alpha}).
\]
Consequently, from the above inequalities and  \eqref{24marca4}, for any $\alpha\in\TT$,
\begin{equation}
\label{24marca7}
|Y_{\alpha}|\le \underline{Y}_{\alpha}^+
+\bar{Y}_{\alpha}^-\le \esssup_{\tau\ge\alpha}\mathbb{E}(L^+_{\tau}|\mathcal{F}_{\alpha})
+\esssup_{\tau\ge\alpha}\mathbb{E}(U^-_{\tau}|\mathcal{F}_{\alpha})+\mathbb E(|\xi||\mathcal F_\alpha).
\end{equation}
Using standard properties of conditional expectation and the Snell envelope  yields
\[
\mathbb{E}|Y_{\alpha}|\le \sup_{\tau\in\TT}\mathbb{E}(L^+_{\tau})+ \sup_{\tau\in\TT}\mathbb{E}(U^-_{\tau})+\mathbb E|\xi|,
\]
which proves \eqref{24marca2}, whereas  \eqref{24marca3} follows from \eqref{24marca7} by an  application 
of the Doob's $L^p$ inequality.
\end{proof}

\begin{lemma}\label{lem.ZR.var}
Let $p>1$ and let $(Y,M,R)$ be a solution to RBSDE$(\xi,L,U)$ with
$Y\in\underline H^p$ and continuous $R$.
Then, for every $\delta>0$, 
\[
\mathbb E\left([M]_T\right)^{p/2}
\le
c_p\left[
(1+\delta^{-1})\mathbb E\sup_{t\le T}|Y_t|^p
+\delta\,\mathbb E|R|_T^p
\right].
\]
\end{lemma}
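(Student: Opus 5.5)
The plan is to apply Itô's formula to $Y^2$ on $[0,T]$, in the standard way for BSDE a priori estimates. From (c), $dY_t=-dR_t+dM_t$. Since $R$ is continuous, it has no jumps; hence all jumps of $Y$ come from $M$, and $[Y]=[M]$. Itô's formula between $t=0$ and $t=T$ gives
\[
[M]_T = Y_T^2 - Y_0^2 + 2\int_0^T Y_{r-}\,dR_r - 2\int_0^T Y_{r-}\,dM_r .
\]
Consequently
\[
[M]_T\le 2\sup_{t\le T}|Y_t|^2 + 2\sup_{t\le T}|Y_t|\,|R|_T + 2\sup_{t\le T}\Big|\int_0^t Y_{r-}\,dM_r\Big| .
\]
(The supremum form is useful for localization, but only the value at $T$ is needed here.)

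Next, raise this to the power $p/2$ and take expectations. The constant $c_p$ is allowed to change from line to line.

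First, use Young's inequality on the cross term:
\[
\big(\sup_t|Y_t|\,|R|_T\big)^{p/2}\le c_p\big(\delta^{-1}\sup_t|Y_t|^p+\delta|R|_T^p\big).
\]

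Second, handle the stochastic integral term with the Burkholder–Davis–Gundy inequality for exponent $p/2$. The difficulty is that for $1<p<2$ this exponent is below $1$. This is the main obstacle, since BDG in the form $\mathbb E\sup_t|N_t|^{q}\le C_q\,\mathbb E[N]_T^{q/2}$ holds for all $q>0$ only for continuous local martingales or under care with jumps. For general càdlàg local martingales, BDG does hold for every $q\ge1$, and by Lenglart's domination inequality it also holds for $q\in(0,1)$. That is the route I would take: Lenglart's inequality applied to the dominating process $[N]^{1/2}$, which is adapted and increasing, gives the bound for $0<q<1$ as well.

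With that, and $N=\int Y_{-}\,dM$,
\[
\mathbb E\sup_t|N_t|^{p/2}\le c_p\,\mathbb E\Big(\int_0^T Y_{r-}^2\,d[M]_r\Big)^{p/4}\le c_p\,\mathbb E\Big(\sup_t|Y_t|^{p/2}[M]_T^{p/4}\Big).
\]
The right-hand side is at most
\[
c_p\,\varepsilon^{-1}\mathbb E\sup_t|Y_t|^p+\varepsilon\,\mathbb E[M]_T^{p/2}.
\]

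The final step is to absorb $\varepsilon\,\mathbb E[M]_T^{p/2}$ into the left-hand side. This requires $\mathbb E[M]_T^{p/2}<\infty$ a priori, which is not given. I would therefore localize. Take stopping times $\sigma_k\uparrow T$ reducing $M$ such that $[M]_{\sigma_k}$ is bounded up to the last jump, and run the argument on $[0,\sigma_k]$ with $Y_{\sigma_k}$ in place of $Y_T$. The jump $\Delta M_{\sigma_k}=\Delta Y_{\sigma_k}$ is bounded by $2\sup|Y|$, so $[M]_{\sigma_k}\le[M]_{\sigma_k-}+4\sup|Y|^2$ has finite $p/2$-moment; here $Y\in\underline H^p$ gives $\sup|Y|\in L^p$. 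Choosing $\varepsilon$ small, say $\varepsilon=1/2$, yields
\[
\mathbb E[M]_{\sigma_k}^{p/2}\le c_p\big[(1+\delta^{-1})\mathbb E\sup_t|Y_t|^p+\delta\,\mathbb E|R|_T^p\big].
\]
Letting $k\to\infty$ and applying monotone convergence gives the claim.
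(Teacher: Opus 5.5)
Your proposal follows the paper's route: It\^o's formula for $Y^2$ with $[Y]=[M]$, Young's inequality on the cross term, BDG for $N=\int Y_{-}\,dM$ at exponent $p/2$, and absorption of $\frac12\mathbb E[M]_T^{p/2}$. Your localization via $\sigma_k$ is an improvement, because the paper absorbs without checking that $\mathbb E[M]_T^{p/2}<\infty$. The genuine gap is the BDG step when $1<p<2$. You claim that Lenglart's inequality gives $\mathbb E\sup_t|N_t|^q\le C_q\,\mathbb E[N]_T^{q/2}$ for $q\in(0,1)$ and every c\`adl\`ag local martingale. That claim is false. Lenglart's moment bound for exponents below $1$ requires a \emph{predictable} dominating process, and $[N]$ (or $[N]^{1/2}$) is only optional. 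Under optional domination the tail estimate acquires an extra term involving the expected largest jump of the dominating process, and this destroys the $L^q$ bound.

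A concrete counterexample: let $\Pi$ be a Poisson process with intensity $s$ and $M_t=\Pi_t-st$ on $[0,1]$. Then $\mathbb E[M]_1^{q/2}=\mathbb E\Pi_1^{q/2}\le\mathbb E\Pi_1=s$. On $\{\Pi_1=0\}$ we have $|M_1|=s$, so $\mathbb E|M_1|^q\ge e^{-s}s^q$. The ratio is at least $e^{-s}s^{q-1}$, which blows up as $s\downarrow0$. Hence your bound on $\mathbb E\sup_t|N_t|^{p/2}$ is unjustified for $p<2$. The paper's proof uses BDG at the same exponent without comment, so this gap is inherited from it. For $p\ge2$ your proof is complete.

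The lemma itself can be rescued by estimating $[M]$ directly instead of $N$. Set $Y^*_t:=\sup_{s\le t}|Y_s|$. It\^o's formula gives $[M]_t+2N_t\le W_t:=(Y^*_t)^2+2Y^*_t|R|_t$. Stop at $\tau:=\sigma_\lambda\wedge T\wedge\rho_n$, where $\sigma_\lambda:=\inf\{t:W_t>\lambda\}$ and $(\rho_n)$ reduces $N$ to uniformly integrable martingales. Then:
\begin{itemize}
\item $[M]_{\tau-}\le (W_T\wedge\lambda)-2N_\tau+2\Delta N_\tau$, and $\mathbb E N_\tau=0$.
\item Since $\Delta N_\tau=Y_{\tau-}\Delta Y_\tau$ with $|Y_{\tau-}|\le\sqrt\lambda$, we get $|\Delta N_\tau|\le 2(W_T\wedge\lambda)+2\sqrt\lambda\,Y^*_T\mathbf 1_{\{W_T>\lambda\}}$.
\item On $\{W_T\le\lambda\}$ the final jump adds at most $4(W_T\wedge\lambda)$.
\end{itemize}
Letting $n\to\infty$ gives
\[
\mathbb P([M]_T>\lambda)\le\mathbb P(W_T>\lambda)+9\lambda^{-1}\mathbb E(W_T\wedge\lambda)+4\lambda^{-1/2}\mathbb E\big(Y^*_T\mathbf 1_{\{W_T>\lambda\}}\big).
\]
Integrate against $q\lambda^{q-1}\,d\lambda$ with $q=p/2\in(\frac12,1)$ and use $Y^*_T\le W_T^{1/2}$. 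This yields $\mathbb E[M]_T^{p/2}\le C_p\,\mathbb E W_T^{p/2}$. The hypothesis $p>1$ is exactly what makes $\int_0^{W_T}\lambda^{q-3/2}\,d\lambda$ finite. Young's inequality on $(Y^*_T|R|_T)^{p/2}$ then gives the lemma, with no absorption step needed.
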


\begin{proof}
By  It\^o's formula 
\[
|Y_0|^2+[M]_T=|\xi|^2+2\int_0^TY_{s-}\,dR_s-2\int_0^TY_{s-}\,dM_s.
\]
From this and  the Burkholder--Davis--Gundy  and Young inequalities, we obtain
\[
\begin{split}
\mathbb E[M]^{p/2}_T&\le c_p\left(\mathbb E|\xi|^p
+\mathbb E[\sup_{t\le T}|Y_t|^{p/2}|R|_T^{p/2}]
+\mathbb E\left(\int_0^T |Y_{s-}|^{2}\,d[M]_s\right)^{p/4}\right)\\&
\le c_p\left(\mathbb E\sup_{t\le T}|Y_t|^p+\delta^{-1}\mathbb E\sup_{t\le T}|Y_t|^{p}
+\delta\mathbb E|R|_T^{p}]
+\frac12\mathbb E\sup_{t\le T} |Y_{t}|^{p}+\frac12\mathbb E[M]^{p/2}_T\right),
\end{split}
\]
which readily yields the result.
\end{proof}

\section{The quantitative Mokobodzki estimate}\label{sec.main.var}

\begin{theorem}
\label{th.main}
There exist  a constant  $c>0$, and, for every $p>1$, a constant  $c_p>0$ such that for any two  
$\mathbb F$-adapted c\`adl\`ag processes $L\le U$,
\begin{equation}
\label{eq.semn}
\inf_{L\leq X\leq U}\|X\|_{\underline H^p}
\le c_p\left(
\|L^+\|_{\mathcal S^p}
+\|U^-\|_{\mathcal S^p}
+[Var_p(L,U)]^{1/p}\right),
\end{equation}
and
\begin{equation}
\label{eq.semn11}
\inf_{L\leq X\leq U}\|X\|_{\underline H^1}
\le c
\left(\|L^+\|_{\mathcal D^1}+\|U^-\|_{\mathcal D^1}+[Var_1(L,U)]\right).
\end{equation}
\end{theorem}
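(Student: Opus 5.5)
We may assume the right-hand side of \eqref{eq.semn} (resp.\ \eqref{eq.semn11}) is finite; otherwise there is nothing to prove. The plan is to produce the semimartingale $X$ as the solution $Y$ of a linear doubly reflected BSDE. Since we need not know a priori that the Mokobodzki condition holds, we first work with approximating barriers for which the RBSDE is solvable. Then we pass to the limit using a lower-semicontinuity variant of the Barlow--Protter theorem.

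\textbf{Step 1 (discretization).} Fix a partition $\pi_n$ of deterministic dyadic times, refined along a sequence. Define the discrete Dynkin-game value $Y^n$ backward on the grid: $Y^n_T=\xi$ with $\xi:=L_T^+\wedge U_T$, say, and
\[
Y^n_{t_i}=\max\bigl(L_{t_i},\min(U_{t_i},\mathbb E^{\mathcal F_{t_i}}Y^n_{t_{i+1}})\bigr).
\]
Extend $Y^n$ between grid points by the càdlàg martingale $\mathbb E^{\mathcal F_t}Y^n_{t_{i+1}}$. Then $Y^n$ is a semimartingale whose predictable part $R^n$ is purely discontinuous, with jumps only at the grid points. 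The jump at $t_i$ equals
\[
\bigl(L_{t_i}-\mathbb E^{\mathcal F_{t_i}}Y^n_{t_{i+1}}\bigr)^+ - \bigl(\mathbb E^{\mathcal F_{t_i}}Y^n_{t_{i+1}}-U_{t_i}\bigr)^+ .
\]

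\textbf{Step 2 (variation of $R^n$).} This is the key estimate. We want
\[
\mathbb E|R^n|_T^p \le c_p\Bigl(Var_p(L,U)+\|L^+\|^p_{\mathcal S^p}+\|U^-\|^p_{\mathcal S^p}\Bigr).
\]
The idea is to bound the pushes by the terms appearing in \eqref{eq.func1} along a random partition $\{\tau_k\}$ of stopping times built from $Y^n$. We alternate between the times at which $Y^n$ touches $L$ and the times at which it touches $U$. Between consecutive switches only one of $dR^{n,+}$, $dR^{n,-}$ acts, so $Y^n$ is a super- or submartingale there. Consequently the accumulated push from $\tau_k$ to $\tau_{k+1}$ is bounded in conditional mean by $[\mathbb E^{\mathcal F_{\tau_k}}(L_{\tau_{k+1}}-U_{\tau_k})]^+$ or by $[\mathbb E^{\mathcal F_{\tau_k}}(U_{\tau_{k+1}}-L_{\tau_k})]^-$, up to boundary terms controlled by $L^+$ and $U^-$.

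For $p=1$ this directly gives $\mathbb E|R^n|_T\le Var_1+\dots$. For $p>1$ we need a $p$-th moment of a sum of predictable increments rather than of conditional expectations. I would handle this with Garsia--Neveu / Lenglart-type inequalities for increasing processes dominated by their potentials: $\mathbb E(A_\infty-A_{\tau}\mid\mathcal F_\tau)\le \mathbb E(\zeta\mid\mathcal F_\tau)$ implies $\|A_\infty\|_p\le p\|\zeta\|_p$. The role of $\zeta$ is played by the sum in \eqref{eq.func1} plus $2\sup(L^+ + U^-)$. Making this domination rigorous is the main obstacle I expect, since the conditional expectations inside the brackets of $Var_p$ are taken before summing.

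\textbf{Step 3 (estimates for $Y^n$ and $M^n$).} Lemma \ref{24marca1} gives uniform bounds in $\mathcal S^p$ (resp.\ $\mathcal D^1$) by $\|L^+\|+\|U^-\|$, because the value of a discrete game is bounded by the corresponding Snell envelopes. Lemma \ref{lem.ZR.var} is stated for continuous $R$, so for the martingale part I would use the standard BDG bound
\[
\|M^n\|\le \|Y^n\|+\|R^n\|,
\]
and in fact only bounds on $Y^n$ and $|R^n|_T$ are needed for the $\underline H^p$-norm.

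\textbf{Step 4 (passage to the limit).} As $n\to\infty$, $Y^n$ converges to the continuous-time game value $Y$, which satisfies $L\le Y\le U$ by right-continuity of the barriers. To show this convergence I would use monotonicity in the mesh along dyadic refinements, or compare with the penalization and RBSDE theory of \cite{K:SPA1}. The Barlow--Protter-type lower semicontinuity then shows that $Y$ is a semimartingale with $\|Y\|_{\underline H^p}\le\liminf_n\|Y^n\|_{\underline H^p}$: convergence in $\mathcal S^1$ (or in ucp with a class (D) bound) together with bounded $\underline H^p$-norms gives this. Combining with Steps 2--3 yields \eqref{eq.semn} and \eqref{eq.semn11}.
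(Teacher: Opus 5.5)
\textbf{The gap is in Step 4, together with the construction in Step 1 that feeds it.} You assert that the discrete Dynkin values $Y^n$ converge to a c\`adl\`ag $Y$ between the barriers, in a mode strong enough for a Barlow--Protter argument. Neither of your suggested routes gives this.

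\emph{Monotonicity in the mesh} holds only for one barrier. In a game, refining the grid enlarges both players' strategy sets. Already in the trivial filtration, inserting a grid point where $U$ dips lowers the discrete value, while inserting one where $L$ peaks raises it.

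\emph{The convergence you need is false in general.} Take the trivial filtration, $T=1$, $U\equiv 2$, $\xi=0$, and $L=\mathbf 1_{[a,b)}$ with $0<a<b<1$ and $b$ non-dyadic.
\begin{itemize}
\item The continuous value is $Y=\mathbf 1_{[0,b)}$.
\item For large $n$, the discrete value is $1$ at the last grid point $t_k<b$ and $0$ from $t_{k+1}>b$ on.
\item With your interpolation, $\sup_t|Y^n_t-Y_t|=1$ for every $n$. So there is neither $\mathcal S^1$ nor ucp convergence.
\end{itemize}
Your interpolation is also not c\`adl\`ag: its right limit at $t_i$ is the unreflected value $\mathbb E^{\mathcal F_{t_i}}Y^n_{t_{i+1}}$. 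You can repair this by carrying the push over $[t_i,t_{i+1})$, so that the predictable part jumps at $t_{i+1}$. But then $Y^n_b=1\neq 0=Y_b$ for every $n$. This breaks even the pointwise convergence required by Proposition \ref{lem:lsc-Hp}, which needs it at every $t$ in (i) and at every $\tau\in\mathcal T$ in (ii). So your argument does not show that a limit exists and is a semimartingale in $\underline H^p$ between the barriers, let alone with the claimed bound.

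\textbf{How the paper avoids this.} It never discretizes. It works with the continuous-time RBSDE$(\xi,L,U)$, which is solvable under a strict gap $U-L\ge\varepsilon$. It makes the reflection continuous by penalization: the penalized $Y^n$ solves the RBSDE with the wider barriers $L\wedge Y^n$, $U\vee Y^n$, so \eqref{eq.varmon} keeps the bound uniform. It then removes the gap via $U+\varepsilon$ and the stability estimate $|Y^\varepsilon-Y^{\varepsilon'}|\le\varepsilon\vee\varepsilon'$. These known convergence results are exactly what Proposition \ref{lem:lsc-Hp} needs. You could try to rescue your scheme by extracting weak limits only at dyadic times and right-regularizing, using right-continuity of $L,U$, but that is a different argument from the one you give.

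\textbf{Step 2 is fine.} The obstacle you flag there is not real. At a grid time $t_m$, let $\sigma$ be the next switching time. The conditional push over the current phase is either zero or equals $\mathbb E(Y^n_{t_m}-Y^n_\sigma\mid\mathcal F_{t_m})$, with $Y^n_\sigma$ on the opposite barrier. By the Snell-envelope bounds, this is at most a constant times $\mathbb E(\sup_t(L^+_t+U^-_t)\mid\mathcal F_{t_m})$. Beyond $\sigma$, the tower property replaces each phase's push by its conditional mean at the switching time, and that mean is a bracket of \eqref{eq.func1}. So the discrete Garsia--Neveu lemma applies with essentially the $\zeta$ you propose. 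By Fatou,
\[
\mathbb E\zeta^p\le c_p\bigl(Var_p(L,U)+\|L^+\|^p_{\mathcal S^p}+\|U^-\|^p_{\mathcal S^p}\bigr).
\]
The same argument works for the continuous-$R$ RBSDE. It is shorter than the paper's $p>1$ argument, which uses the Doob decomposition of $K^\pm$, orthogonality of the martingale parts, BDG, and absorption via Lemma \ref{lem.ZR.var}, and needs $\mathbb E|R|_T^p<\infty$ a priori.
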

\begin{proof}
Assume that either there exists $p>1$ such that 
\begin{equation}
\label{eq.lulu1}
\|L^+\|_{\mathcal S^p}+\|U^-\|_{\mathcal S^p}+Var_p(L,U)<\infty
\end{equation}
or 
\begin{equation}
\label{eq.lulu2}
\|L^+\|_{\mathcal D^1}+\|U^-\|_{\mathcal D^1}+ Var_1(L,U)<\infty.
\end{equation} 
We refer to the former as the case $p>1$, and to the latter as the case $p=1$.
Let   $\xi\in L^p$ be  such that   $L_T\le \xi\le U_T$.
We show that, in the case $p>1$, there exists a 
solution $(Y,M,R)$ to RBSDE$(\xi,L,U)$ such that 
\begin{equation}
\label{eq.ass1}
\|Y\|_{\underline H^p}\le c_p( \|L^+\|_{\mathcal S^p}+\|U^-\|_{\mathcal S^p}+ (Var_p(L,U))^{1/p}+\|\xi\|_{L^p}),
\end{equation}
and, in the case $p=1$,  there exists a solution $(Y,M,R)$ to RBSDE$(\xi,L,U)$  such that 
\begin{equation}
\label{eq.ass111}
\|Y\|_{\underline H^1}\le  c(\|L^+\|_{\mathcal D^1}+\|U^-\|_{\mathcal D^1}+ Var_1(L,U)+\|\xi\|_{L^1}).
\end{equation}
From this the asserted result readily follows by taking $\xi:= L^+_T\wedge U_T$.

{\bf Step 1}. Suppose additionally that there exists $\varepsilon>0$ such that $U-L\ge \varepsilon$
on $[0,T]$. 
Let $(Y,M,R)$ be a solution to RBSDE$(\xi,L,U)$
such that $Y\in\mathcal S^p_{\mathbb{F}}(0,T)$ in the case $p>1$ or $Y$ is of class \textnormal{(D)} in the case $p=1$
(see \cite[Proposition A.7, Remark A.8]{K:SPA2} and Lemma \ref{24marca1}). 
Suppose additionally that $R$ is continuous and $\mathbb E|R|^p_T<\infty$. 

We claim that \eqref{eq.ass1} holds in the case $p>1$, 
whereas \eqref{eq.ass111} holds in the case $p=1$.
Indeed, let us define $(\tau_i)$ as follows: $\tau_0:=0$, and 
\[
\tau_{2 i+1}:=\inf \{t \geq \tau_{2 i}: Y_{t} \leq L_{t}\} \wedge T, 
\quad \tau_{2i+2}:=\inf \{t \geq \tau_{2 i+1}: Y_{t} \geq U_{t}\} \wedge T,\quad i\ge 0.
\] 
Set $K_i:= R_{\tau_i}$ and $K^{+}_i:=R^+_{\tau_i}$, $K^{-}_i:=R^-_{\tau_i}$, 
$S_i:= Y_{\tau_i}$, and $\mathcal G_i:=\mathcal F_{\tau_i}$. By abuse of notation we also let $L_i:= L_{\tau_i}$,
$U_i:= U_{\tau_i}$.
Let $P$, $P'$ denote the sets of even and odd natural numbers, respectively. 
Observe that, by the minimality condition for $R$, we have   
\begin{equation}
\label{eq.var0}
K^+_{i+1}-K^+_i=0,\, i\in P,\quad  K^-_{i+1}-K^-_i=0,\, i\in P'.
\end{equation}
Indeed, since we assumed that $R$ is continuous, we have 
\[
\int_0^T(Y_{s}-L_{s})\,dR^+_s=\int_0^T(U_{s}-Y_s)\,dR^-_s=0.
\]
On the interval  $(\tau_{2i},\tau_{2i+1})$, $Y_t>L_t$, while on $(\tau_{2i+1},\tau_{2i+2})$, $Y_t<U_t$.
This combined with continuity of $R$ yields
\[
R^+_{\tau_{2i+1}}=R^+_{\tau_{2i}},\quad R^-_{\tau_{2i+2}}=R^-_{\tau_{2i+1}}.
\]
Consequently,  
\begin{equation}
\label{eq.var2}
\mathbb E(K_{i+1}-K_i|\mathcal G_i)= \mathbb E(K^+_{i+1}-K^+_i|\mathcal G_i)
=\mathbb E(S_i-S_{i+1}|\mathcal G_i),\quad i\in P',
\end{equation}
and 
\begin{equation}
\label{eq.var3}
\mathbb E(K_{i+1}-K_i|\mathcal G_i)=-\mathbb E(K^-_{i+1}-K^-_i|\mathcal G_i)
=-\mathbb E(S_{i+1}-S_i|\mathcal G_i),\quad i\in P.
\end{equation}
Hence, for any $p\ge 1$,
\begin{equation}
\label{eq.var1}
\begin{split}
&\left (\sum_{i\ge 0} \mathbb E(K^+_{i+1}-K^+_i|\mathcal G_i)\right)^p+
\left (\sum_{i\ge 0} \mathbb E(K^-_{i+1}-K^-_i|\mathcal G_i)\right)^p\\& \le
\left(\sum_{i\in P'} \left(\mathbb E\left(S_i-S_{i+1}|\mathcal G_i\right)\right)^+\right)^p
+\left(\sum_{i\in P} \left(\mathbb E\left (S_{i+1}-S_{i}|\mathcal G_i\right)\right)^+\right)^p.
\end{split}
\end{equation}
Let $\theta:=\min\{i\ge 1: \tau_i=T\}$. It is finite since $U-L\ge \varepsilon$ on $[0,T)$.
By the definitions of $\theta$ and $(\tau_i)$ and right-continuity of barriers
\begin{equation*}
\begin{split}
&S_i-S_{i+1}=0,\,\, i\ge \theta, \quad S_i-S_{i+1}\le L_i-U_{i+1},\,\, i\in P', 0\le i\le \theta-2,\\
&S_{i+1}-S_i\le L_{i+1}-U_i,\,\, i\in P, 0\le i\le \theta-2.
\end{split}
\end{equation*}
We have not yet covered  the case  $i=\theta-1$. If $\theta\in P$, then 
$i\in P'$ and 
\[
S_i-S_{i+1}=S_{\theta-1}-S_\theta\le L_{\theta-1}-S_{\theta}=L_{\theta-1}-\xi\le L^+_{\theta-1}+\xi^-,
\]
while   in  case $\theta\in P'$, $\theta\ge 3$ we have $i\in P$ and
\[
S_{i+1}-S_i=S_{\theta}-S_{\theta-1}\le S_\theta-U_{\theta-1}=\xi-U_{\theta-1}\le \xi^++U^-_{\theta-1}.
\]
Finally, observe that $K^+\equiv 0$ in the case $\theta=1$, and
\[
S_1-S_0=\xi-Y_0.
\]
Consequently, by using  \eqref{eq.var1} and Fatou's lemma, we infer that  
\begin{equation}
\label{eq.var14}
\begin{split}
&\mathbb E \left (\sum_{i\ge 0} \mathbb E(K^+_{i+1}-K^+_i|\mathcal G_i)\right)^p+
\mathbb E\left (\sum_{i\ge 0} \mathbb E(K^-_{i+1}-K^-_i|\mathcal G_i)\right)^p\\&\qquad\qquad\qquad\qquad
\le c_p\left(Var_p(L,U)+\|(L^+)^p\|_{\mathcal D^1}+\|(U^-)^p\|_{\mathcal D^1}+\mathbb E|Y_0|^p+\mathbb E|\xi|^p\right).
\end{split}
\end{equation}
By the Doob-Meyer decomposition
\[
K_n=A_n+N_n,\quad n\ge 1,
\]
where $(N_n)_{n\ge 1}$ is a    $(\mathcal G_n)$-martingale and $(A_n)_{n\ge 1}$ is a  
$(\mathcal G_n)$-predictable process. In fact
\[
A_n=\sum_{i=0}^{n-1}(\mathbb E^{\mathcal G_i}K_{i+1}-K_{i}),\quad N_n
= \sum_{i=0}^{n-1}(K_{i+1}-\mathbb E^{\mathcal G_i}K_{i+1}),\quad n\ge 0.
\]
On the other hand, by the Doob-Meyer decomposition again,
\[
K^+_n=A^1_n+N^1_n,\quad K^-_n=A^2_n+N^2_n,\quad n\ge 1,
\]
where $(N^1_n)_{n\ge 1}, (N^2_n)_{n\ge 1}$ are     $(\mathcal G_n)$-martingales and 
$(A^1_n)_{n\ge 1}, (A^2_n)_{n\ge 1}$ are   $(\mathcal G_n)$-predictable increasing processes. Furthermore,
\[
A^1_n=\sum_{i=0}^{n-1}(\mathbb E^{\mathcal G_i}K^+_{i+1}-K^+_{i}),\quad 
A^2_n=\sum_{i=0}^{n-1}(\mathbb E^{\mathcal G_i}K^-_{i+1}-K^-_{i}),\quad n\ge 0,
\]
and
\[
N^1_n= \sum_{i=0}^{n-1}(K^+_{i+1}-\mathbb E^{\mathcal G_i}K^+_{i+1}),\quad
N^2_n= \sum_{i=0}^{n-1}(K^-_{i+1}-\mathbb E^{\mathcal G_i}K^-_{i+1}),\quad  n\ge 0.
\]
Therefore,  by \eqref{eq.var14},
\begin{equation}
\label{eq.tripaa}
\mathbb E(A^1_\infty)^p+\mathbb E(A^2_\infty)^p\le c_p\left(Var_p(L,U)+\|(L^+)^p\|_{\mathcal D^1}
+\|(U^-)^p\|_{\mathcal D^1}+\mathbb E|Y_0|^p+\mathbb E|\xi|^p\right).
\end{equation}
As a result, by  Lemma \ref{24marca1}, in the case  $p>1$, we have
\begin{equation}
\label{eq.var1var}
\mathbb E|R|_T^p=\mathbb E(K^+_\infty+K^-_\infty)^p
\le c_p \left(Var_p(L,U)+\|L^+\|_{\mathcal S^p}^p+\|U^-\|_{\mathcal S^p}^p
+\mathbb E \sup_{n\ge 0}|N^1_n+N^2_n|^p+\mathbb E|\xi|^p\right),
\end{equation}
and, in the case $p=1$,
\begin{equation}
\label{eq.r1r1}
\mathbb E|R|_T=\mathbb E(K^+_\infty+K^-_\infty)\le 
c\left(Var_1(L,U)+\|L^+\|_{\mathcal D^1}+\|U^-\|_{\mathcal D^1}+\mathbb E|\xi|\right).
\end{equation}
The latter inequality combined with Lemma \ref{24marca1} gives the claim in the case $p=1$.
For the remainder of this step, we therefore consider only the case $p>1$.
The key point is that $N^1,N^2$ are orthogonal, i.e. $[N^1+N^2]=[N^1]+[N^2]$.
Indeed, by \eqref{eq.var0}
\[
N^1_n= \sum_{0\le i\le n-1, i\in P'}(K^+_{i+1}-\mathbb E^{\mathcal G_i}K^+_{i+1}),\quad
N^2_n= \sum_{0\le i\le n-1, i\in P}(K^-_{i+1}-\mathbb E^{\mathcal G_i}K^-_{i+1}),\quad  n\ge 0.
\]
Set 
\[
H_n:=S_n+A_n,\quad n\ge 0.
\]
Then
\[
H_n=Y_0-K_n+M_{\tau_n}+A_n=Y_0+M_{\tau_n}-N_n,\quad n\ge 0.
\]
Hence, by \eqref{eq.tripaa},
\[
\begin{split}
\mathbb E \sup_{n\ge 0}|N^1_n+N^2_n|^p&\le c_p\mathbb E \sup_{n\ge 0}|N_n|^p
\le c_p\left(\mathbb E \sup_{n\ge 0}|H_n|^p+\mathbb E|Y_0|^p+\mathbb E|\xi|^p+\mathbb E([M]_T^{p/2})\right)\\&
\le c_p\left(\|Y\|_{\mathcal S^p}^p+\|L^+\|_{\mathcal S^p}^p
+\|U^-\|_{\mathcal S^p}^p+\mathbb E([M]_T)^{p/2}+Var_p(L,U)\right).
\end{split}
\]
By  Lemmas \ref{24marca1} and \ref{lem.ZR.var}, we have
\begin{equation}
\label{eq.ass2}
\begin{split}
\mathbb E\sup_{0\le t\le T}|Y_t|^p+\mathbb E\big([M]_T)^{p/2}&\le c_p\left(
\frac1\delta(\|L^+\|_{\mathcal S^p}^p+\|U^-\|_{\mathcal S^p}^p+\mathbb E|\xi|^p)+\delta\mathbb E|R|_T^p\right).
\end{split}
\end{equation}
Combining this with the previous inequality  and \eqref{eq.var1var} yields 
\begin{equation}
\label{eq.rprp}
\mathbb E|R|_T^p\le c_p(\|L^+\|_{\mathcal S^p}^p+\|U^-\|_{\mathcal S^p}^p+ Var_p(L,U)+\|\xi\|_{L^p}^p).
\end{equation}
Consequently, by Lemma \ref{24marca1}, we obtain the claim in the case $p>1$.

{\bf Step 2.} 
We still assume that $U-L\ge\varepsilon$ on $[0,T]$ for some $\varepsilon>0$,
and we consider  the triple  $(Y,M,R)$ as in Step 1. 
We claim    that either \eqref{eq.ass1}, in the case $p>1$, or 
\eqref{eq.ass111}  in the case $p=1$, holds.
We have already proved  this in Step 1 under the additional assumptions that $R$ is continuous 
and $\mathbb E|R|^p_T<\infty$. 
Thus, the goal of this step is to show that these two additional 
assumptions can be removed without affecting the conclusion of Step 1.
First, we only assume that $R$ is continuous. 
Then defining  $\tau_k:=\inf\{t>0: |R|_t\ge k\}\wedge T$, 
we obtain that $\mathbb E|R|^p_{\tau_k}<\infty$ for any $k\ge 1$. 
Now observe that $(Y^{\tau_k},M^{\tau_k},R^{\tau_k})$ 
is a solution to RBSDE$(Y_{\tau_k}, L^{\tau_k},U^{\tau_k})$. Therefore, by Step 1 we have
\begin{equation}
\label{eq.rprp1}
\begin{split}
\mathbb E|R|_{\tau_k}^p&\le c_p(\|(L^{\tau_k})^+\|_{\mathcal S^p}^p
+\|(U^{\tau_k})^-\|_{\mathcal S^p}^p+ Var_p(L^{\tau_k},U^{\tau_k})+\|Y_{\tau_k}\|^p_{L^p})\\&
\le c_p(\|L^+\|_{\mathcal S^p}^p+\|U^-\|_{\mathcal S^p}^p+ Var_p(L,U)+\|Y_{\tau_k}\|^p_{L^p}),
\end{split}
\end{equation}
in the case $p>1$ and
\begin{equation}
\label{eq.rprp2}
\begin{split}
\mathbb E|R|_{\tau_k}&\le c(\|(L^{\tau_k})^+\|_{\mathcal D^1}
+\|(U^{\tau_k})^-\|_{\mathcal D^1}+ Var_1(L^{\tau_k},U^{\tau_k})+\|Y_{\tau_k}\|_{L^1})
\\&
\le c(\|L^+\|_{\mathcal D^1}+\|U^-\|_{\mathcal D^1}+ Var_1(L,U)+\|Y_{\tau_k}\|_{L^1}),
\end{split}
\end{equation}
in the case $p=1$. 
Thus letting $k\to \infty$ in \eqref{eq.rprp1} and  \eqref{eq.rprp2}  yields \eqref{eq.rprp} and \eqref{eq.r1r1}.  
Applying Lemma \ref{24marca1} gives either \eqref{eq.ass1}, in the case $p>1$, or \eqref{eq.ass111}, in the  case $p=1$. 

We now remove the continuity assumption on $R$.
Assume first that $p=1$. By \cite[Theorem 2.7]{KR:JFA}, for every
$n\geq1$ there exists a unique pair $(Y^n,M^n)$ of c\`adl\`ag
$\mathbb F$-adapted processes satisfying
\[
Y^n_t
=
\xi+n\int_t^T
\big[(Y^n_s-L_s)^--(Y^n_s-U_s)^+\big]\,ds
-(M^n_T-M^n_t),
\qquad t\in[0,T],
\]
such that $Y^n$ is of class \textnormal{(D)} and $M^n$ is a uniformly
integrable martingale with $M^n_0=0$.
In the case $p>1$, the same result yields a solution with $Y^n$ of
class \textnormal{(D)}; it remains to verify the stronger integrability
property $Y^n\in\mathcal S^p$. Set
\[
f_n(t,y):=
n\big[(y-L_t)^--(y-U_t)^+\big].
\]
Since $f_n$ is non-increasing in $y$,  the Meyer-It\^o  formula (see \cite[Theorem IV.70]{Protter}) yields
\[
|Y_t^n|
\leq
\mathbb E\left(
|\xi|+\int_0^T|f_n(s,0)|\,ds
\,\middle|\,\mathcal F_t
\right),
\qquad t\in[0,T].
\]
Moreover, $|f_n(s,0)|\leq n(L_s^++U_s^-)$.
Hence, since $\xi\in L^p$ and
$L^+,U^-\in\mathcal S^p$, Doob's inequality implies that
$Y^n\in\mathcal S^p$.
One easily checks that $(Y^{n},M^{n},R^{n})$ is a solution to RBSDE$(\xi,L^{n},U^{n})$, where
\[
L^{n}_t:= L_t\wedge Y^{n}_t,\quad U^{n}_t:= U_t\vee Y^{n}_t,\quad 
R^{n}_t:= n\int_0^t[(Y^{n}_s-L_s)^--(Y^{n}_s-U_s)^+]\,ds,
\]
and $U^{n}-L^{n}\ge \varepsilon$ on $[0,T]$.
By what has already been  proved ($R^{n}$ is continuous) and \eqref{eq.varmon},
we have 
\[
\begin{split}
\mathbb E|R^{n}|_T^p&\le c_p(\|(L^{n})^+\|_{\mathcal S^p}^p+\|(U^{n})^-\|_{\mathcal S^p}^p
+ Var_p(L^{n},U^{n})+\|\xi\|^p_{L^p})\\&\le c_p(\|L^+\|_{\mathcal S^p}^p+\|U^-\|_{\mathcal S^p}^p
+ Var_p(L,U)+\|\xi\|^p_{L^p}),
\end{split}
\]
in the case $p>1$, and 
\[
\begin{split}
\mathbb E|R^{n}|_T&\le 
c\left(Var_1(L^{n},U^{n})+\|(L^{n})^+\|_{\mathcal D^1}+\|(U^{n})^-\|_{\mathcal D^1}+\|\xi\|_{L^1}\right)
\\&\le c\left(Var_1(L,U)+ \|L^+\|_{\mathcal D^1}+\|U^-\|_{\mathcal D^1}+\|\xi\|_{L^1}\right),
\end{split}
\]
in the case $p=1$.
It follows from this,  \cite[Corollary 3.12]{K:SPA2} and Proposition  \ref{lem:lsc-Hp} 
that the  inequalities \eqref{eq.r1r1},\eqref{eq.rprp} hold
in the general case. 
Consequently,  by  Lemma \ref{24marca1}, we obtain the claim.
This completes the proof of Step 2.

{\bf Step 3.}  We now dispense with the assumption that $U-L\ge \varepsilon $ on $[0,T]$ for some $\varepsilon >0$.
For $\varepsilon\in (0,1]$, set $U^\varepsilon:= U+\varepsilon$.
Let  $(Y^\varepsilon,M^\varepsilon,R^\varepsilon)$ be a solution to RBSDE$(\xi,L,U^\varepsilon)$
such that $Y^\varepsilon\in\mathcal S^p$, in the case $p>1$, and of class (D) in the  case $p=1$
(see \cite[Proposition A.7, Remark A.8]{K:SPA2} and Lemma \ref{24marca1}).
By Step 2 and the monotonicity property \eqref{eq.varmon} of the mean co-variation, in the case  $p>1$,
\begin{equation}
\label{eq.semneps}
\begin{split}
\|Y^\varepsilon\|_{\underline H^p}
&\leq
c_p\left(\|L^+\|_{\mathcal S^p}+\|(U^\varepsilon)^-\|_{\mathcal S^p}+[Var_p(L,U^\varepsilon)]^{1/p}+\|\xi\|_{L^p}\right)
\\&\le c_p\left(\|L^+\|_{\mathcal S^p}+\|U^-\|_{\mathcal S^p}+[Var_p(L,U)]^{1/p}+\|\xi\|_{L^p}\right),
\end{split}
\end{equation}
while, in the case  $p=1$, 
\begin{equation}
\label{eq.semn11eps}
\begin{split}
\|Y^\varepsilon \|_{\underline H^1}
&\leq c\left(\|L^+\|_{\mathcal D^1}+\|(U^\varepsilon)^-\|_{\mathcal D^1}+Var_1(L,U^\varepsilon)+\|\xi\|_{L^1}\right)
\\&
\le c\left(\|L^+\|_{\mathcal D^1}+\|U^-\|_{\mathcal D^1}+Var_1(L,U)+\|\xi\|_{L^1}\right).
\end{split}
\end{equation}
By \cite[Theorem 3.14]{K:SPA2}, $|Y^\varepsilon_t-Y^{\varepsilon'}_t|\le \varepsilon\vee\varepsilon'$, $t\in [0,T]$.
Thus, there exists a c\`adl\`ag process $Y$ such that either
$\|Y^\varepsilon -Y\|_{\mathcal S^p}\to 0$, in the case $p>1$, or $\|Y^\varepsilon -Y \|_{\mathcal D^1}\to 0$,
 in the case $p=1$.  By Proposition \ref{lem:lsc-Hp} $Y\in \underline{H}^p$ and  letting   $\varepsilon \to 0$
 in \eqref{eq.semneps} and \eqref{eq.semn11eps} yields  either \eqref{eq.ass1}, in the case $p>1$, or 
\eqref{eq.ass111}  in the case $p=1$.
\end{proof}

\begin{proposition}\label{prop.var.lower}
Let $p\geq1$, and let $L,U$ be $\mathbb F$-adapted c\`adl\`ag processes such that
$L\leq U$. Let $X\in \underline H^p$ satisfy   $L\le X\le U$ and  admit  the canonical decomposition 
\[
X_t=X_0+A_t+M_t,\qquad t\in[0,T].
\]
Then, for $p>1$,
\begin{equation}\label{eq.var.lower.p}
[Var_p(L,U)]^{1/p}
\leq
p\big(\mathbb E|A|_T^p\big)^{1/p},
\end{equation}
whereas for $p=1$,
\begin{equation}\label{eq.var.lower.1}
Var_1(L,U)\leq \mathbb E|A|_T.
\end{equation}
\end{proposition}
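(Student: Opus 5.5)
Fix an arbitrary finite partition $\pi=\{0=\tau_0\le\dots\le\tau_n=T\}$ of stopping times. We bound each summand of \eqref{eq.func1} by conditional increments of $|A|$ and then sum. Since $L\le X\le U$, we have
\[
\mathbb E^{\mathcal F_{\tau_i}}(L_{\tau_{i+1}}-U_{\tau_i})\le \mathbb E^{\mathcal F_{\tau_i}}(X_{\tau_{i+1}}-X_{\tau_i})
=\mathbb E^{\mathcal F_{\tau_i}}(A_{\tau_{i+1}}-A_{\tau_i})+\mathbb E^{\mathcal F_{\tau_i}}(M_{\tau_{i+1}}-M_{\tau_i}).
\]
Symmetrically, $\mathbb E^{\mathcal F_{\tau_i}}(U_{\tau_{i+1}}-L_{\tau_i})\ge \mathbb E^{\mathcal F_{\tau_i}}(X_{\tau_{i+1}}-X_{\tau_i})$.

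The martingale terms vanish provided $M$ is a true martingale with the needed integrability at stopping times. For $p>1$, $X\in\mathcal S^p$ and $|A|_T\in L^p$, so $M=X-X_0-A\in\mathcal S^p$ is a uniformly integrable martingale. For $p=1$, $X$ is of class (D) and $|A|_T\in L^1$, so $M$ is a local martingale of class (D), hence a uniformly integrable martingale. Optional stopping then gives $\mathbb E^{\mathcal F_{\tau_i}}(M_{\tau_{i+1}}-M_{\tau_i})=0$. Using $(a)^+\le \mathbb E(\zeta^+|\mathcal G)$ when $a\le\mathbb E(\zeta|\mathcal G)$, and writing $A=A^+-A^-$, each summand in \eqref{eq.func1} is therefore bounded by
\[
\mathbb E^{\mathcal F_{\tau_i}}\big(A^+_{\tau_{i+1}}-A^+_{\tau_i}\big)+\mathbb E^{\mathcal F_{\tau_i}}\big(A^-_{\tau_{i+1}}-A^-_{\tau_i}\big)
=\mathbb E^{\mathcal F_{\tau_i}}\big(|A|_{\tau_{i+1}}-|A|_{\tau_i}\big).
\]
Summing over $i$, the quantity inside the $p$-th power is at most $\Sigma:=\sum_{i}\mathbb E^{\mathcal F_{\tau_i}}(|A|_{\tau_{i+1}}-|A|_{\tau_i})$, which is the terminal value $B_n$ of the discrete predictable compensator of the increasing sequence $(|A|_{\tau_k})_k$ with respect to $(\mathcal F_{\tau_k})$.

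For $p=1$ we take expectations: $\mathbb E\Sigma=\mathbb E|A|_T$. Taking the supremum over $\pi$ gives \eqref{eq.var.lower.1}.

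For $p>1$ we need $\mathbb E B_n^p\le p^p\,\mathbb E(|A|_T)^p$. This is the main step, and we plan to handle it by the classical Garsia–Neveu/Lenglart-type inequality for the dual predictable projection of an increasing process: if $C$ is increasing and adapted and $B$ is its predictable compensator, then $\|B_\infty\|_{L^p}\le p\,\|C_\infty\|_{L^p}$. Here it is used in discrete time with $C_k=|A|_{\tau_k}$.

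If the cited inequality is not taken as known, we would prove it directly by duality. Write $\mathbb E B_n^p=\mathbb E\sum_k (B_{k+1}^p-B_k^p)$. Bound each increment by $p B_{k+1}^{p-1}(B_{k+1}-B_k)$. Since $B_{k+1}$ is $\mathcal F_{\tau_k}$-measurable (predictability), we may replace $B_{k+1}-B_k$ by $C_{k+1}-C_k$ inside the expectation. Using $B_{k+1}\le B_n$ this gives
\[
\mathbb E B_n^p\le p\,\mathbb E\big(B_n^{p-1}C_n\big)\le p\,(\mathbb E B_n^p)^{(p-1)/p}(\mathbb E C_n^p)^{1/p}.
\]
Dividing through, after a routine truncation to ensure $\mathbb E B_n^p<\infty$, yields $(\mathbb E B_n^p)^{1/p}\le p(\mathbb E|A|_T^p)^{1/p}$. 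Taking the supremum over $\pi$ then gives \eqref{eq.var.lower.p}, with exactly the constant $p$ appearing in the statement.
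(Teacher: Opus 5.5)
Your proposal is correct and follows essentially the same route as the paper. The barrier comparison reduces each summand to $\mathbb E^{\mathcal F_{\tau_i}}(|A|_{\tau_{i+1}}-|A|_{\tau_i})$, and for $p>1$ your direct proof (telescoping with $B_{k+1}^p-B_k^p\le pB_{k+1}^{p-1}(B_{k+1}-B_k)$, using the $\mathcal F_{\tau_k}$-measurability of $B_{k+1}$, then H\"older) is exactly the paper's argument. Your checks that $M$ is a uniformly integrable martingale and that $\mathbb E B_n^p<\infty$ fill in details the paper leaves implicit; the truncation is unnecessary, since $B_n\le\sum_{i<n}\mathbb E(|A|_T\,|\,\mathcal F_{\tau_i})\in L^p$ automatically.
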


\begin{proof}
Fix a random partition $\pi=\{0=\tau_0\leq\tau_1\leq\cdots\leq\tau_N=T\}$
consisting of stopping times, and put $\mathcal G_i:=\mathcal F_{\tau_i}$.
Since $X$ lies between the barriers $L,U$,  we have
\[
\left[\mathbb E^{\mathcal G_i}\big(L_{\tau_{i+1}}-U_{\tau_i}\big)\right]^+
\leq\left[\mathbb E^{\mathcal G_i}\big(X_{\tau_{i+1}}-X_{\tau_i}\big)\right]^+,\quad 
\left[
\mathbb E^{\mathcal G_i}
\big(U_{\tau_{i+1}}-L_{\tau_i}\big)
\right]^-
\leq
\left[
\mathbb E^{\mathcal G_i}
\big(X_{\tau_{i+1}}-X_{\tau_i}\big)
\right]^-.
\]
Therefore
\begin{equation}
\label{eq.var.X.increment}
\left[\mathbb E^{\mathcal G_i}\big(L_{\tau_{i+1}}-U_{\tau_i}\big)\right]^+
+\left[\mathbb E^{\mathcal G_i}\big(U_{\tau_{i+1}}-L_{\tau_i}\big)
\right]^-
\leq\left|\mathbb E^{\mathcal G_i}\big(X_{\tau_{i+1}}-X_{\tau_i}\big)\right|.
\end{equation}
By the martingale property of $M$,
\[
\left|\mathbb E^{\mathcal G_i}
\big(X_{\tau_{i+1}}-X_{\tau_i}\big)\right|
= 
\left|\mathbb E^{\mathcal G_i}
\big(A_{\tau_{i+1}}-A_{\tau_i}\big)\right|
\le 
\mathbb E^{\mathcal G_i}
\big(|A|_{\tau_{i+1}}-|A|_{\tau_i}\big)=:V_i.
\]
Set $C_k:=\sum_{i=0}^{k-1}V_i$.
It follows from \eqref{eq.var.X.increment} that
\begin{equation}\label{eq.var.B}
Var_p(L,U)
\leq
\sup_{\pi}\mathbb E C_N^p.
\end{equation}
Assume first that $p>1$. Since $(C_k)$ is increasing,
\[
C_N^p
=
\sum_{i=0}^{N-1}
\big(C_{i+1}^p-C_i^p\big)
\leq
p\sum_{i=0}^{N-1}C_{i+1}^{p-1}V_i.
\]
Since $C_{i+1}$ is $\mathcal G_i$-measurable,
\[
\mathbb E C_N^p\leq p\sum_{i=0}^{N-1} \mathbb E\left[C_{i+1}^{p-1}\big(|A|_{\tau_{i+1}}-|A|_{\tau_i}\big)\right]
\leq p\,\mathbb E\big(C_N^{p-1}|A|_T\big).
\]
By H\"older's inequality,
\[
\mathbb E C_N^p
\leq
p\,
\big(\mathbb E C_N^p\big)^{(p-1)/p}
\big(\mathbb E |A|_T^p\big)^{1/p}.
\]
Hence
\[
\|C_N\|_{L^p}
\leq
p\big(\mathbb E|A|_T^p\big)^{1/p}.
\]
Taking the supremum over $\pi$  yields
\eqref{eq.var.lower.p}.
For $p=1$, $\mathbb E C_N=\mathbb E|A|_T$.
Thus \eqref{eq.var.lower.1} follows directly from
\eqref{eq.var.B}.
\end{proof}

\begin{theorem}
Let $X\in \underline{H}^p$ be such that $L\le X\le U$. If  $p>1$, then 
\begin{equation}
\label{eq.lower.equiv.p}
\|L^+\|_{\mathcal S^p}+\|U^-\|_{\mathcal S^p}+[Var_p(L,U)]^{1/p}\leq(2\vee p)\|X\|_{\underline H^p},
\end{equation}
while  for $p=1$,
\begin{equation}\label{eq.lower.equiv.1}
\|L^+\|_{\mathcal D^1}
+\|U^-\|_{\mathcal D^1}
+Var_1(L,U)
\leq
2\|X\|_{\underline H^1}.
\end{equation}
\end{theorem}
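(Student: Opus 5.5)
The plan is to bound the three terms on the left-hand side separately and then add the bounds. The barrier terms are controlled by the $\mathcal S^p$ (resp. $\mathcal D^1$) part of $\|X\|_{\underline H^p}$. The co-variation term is controlled by Proposition \ref{prop.var.lower} through the predictable finite-variation part of $X$.

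\textbf{Barrier terms.} Since $L\le X\le U$, we have pointwise $L^+\le X^+\le |X|$ and $U^-\le X^-\le|X|$. Hence $L^+_t+U^-_t\le |X_t|$ for every $t$, because at each $t$ at most one of $X_t^+,X_t^-$ is nonzero. In fact $L^+_t\le X^+_t$ and $U^-_t\le X^-_t$, so $L^+_t+U^-_t\le X^+_t+X^-_t=|X_t|$.
\begin{itemize}
\item For $p>1$ I would simply use $\|L^+\|_{\mathcal S^p}\le\|X\|_{\mathcal S^p}$ and $\|U^-\|_{\mathcal S^p}\le\|X\|_{\mathcal S^p}$, which gives $\|L^+\|_{\mathcal S^p}+\|U^-\|_{\mathcal S^p}\le 2\|X\|_{\mathcal S^p}$.
\item For $p=1$, for each $\tau\in\TT$ we have $\mathbb E L^+_\tau\le \mathbb E|X_\tau|\le\|X\|_{\mathcal D^1}$, and likewise $\mathbb E U^-_\tau\le\|X\|_{\mathcal D^1}$. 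Taking suprema gives $\|L^+\|_{\mathcal D^1}+\|U^-\|_{\mathcal D^1}\le 2\|X\|_{\mathcal D^1}$.
\end{itemize}

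\textbf{Co-variation term.} Proposition \ref{prop.var.lower} gives $[Var_p(L,U)]^{1/p}\le p(\mathbb E|A|_T^p)^{1/p}$ for $p>1$, and $Var_1(L,U)\le\mathbb E|A|_T$ for $p=1$.

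\textbf{Combining.} For $p>1$ the left side is at most $2\|X\|_{\mathcal S^p}+p(\mathbb E|A|_T^p)^{1/p}$. This is bounded by $(2\vee p)\big(\|X\|_{\mathcal S^p}+(\mathbb E|A|_T^p)^{1/p}\big)=(2\vee p)\|X\|_{\underline H^p}$. For $p=1$ we get $2\|X\|_{\mathcal D^1}+\mathbb E|A|_T\le 2\|X\|_{\underline H^1}$.

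The only point needing care, and the place I expect any obstacle, is ensuring the hypotheses of Proposition \ref{prop.var.lower} hold. That is, $X\in\underline H^p$ with its canonical decomposition and $A$ predictable, which are exactly the standing assumptions here. The remaining steps are elementary pointwise comparisons.
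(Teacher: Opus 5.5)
Your proposal is correct and follows the paper's proof: bound each barrier term pointwise by $|X|$ to get $2\|X\|_{\mathcal S^p}$ (resp.\ $2\|X\|_{\mathcal D^1}$), control the co-variation term by Proposition \ref{prop.var.lower}, and combine. The final step uses $2a+pb\le(2\vee p)(a+b)$, or $2a+b\le 2(a+b)$ when $p=1$.
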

\begin{proof}
Since $L\leq X\leq U$, $L^+_t\leq |X_t|$ and  $U^-_t\leq |X_t|$, $t\in[0,T]$.
Consequently, for $p>1$,
\[
\|L^+\|_{\mathcal S^p}
+
\|U^-\|_{\mathcal S^p}
\leq
2\|X\|_{\mathcal S^p}.
\]
Combining this with \eqref{eq.var.lower.p} gives
\[
\|L^+\|_{\mathcal S^p}
+\|U^-\|_{\mathcal S^p}
+[Var_p(L,U)]^{1/p}\leq
2\|X\|_{\mathcal S^p}+p\big(\mathbb E|A|_T^p\big)^{1/p}\leq
(2\vee p)\|X\|_{\underline H^p}.
\]
This proves \eqref{eq.lower.equiv.p}.
Similarly,
\[
\|L^+\|_{\mathcal D^1}
+
\|U^-\|_{\mathcal D^1}
\leq
2\|X\|_{\mathcal D^1},
\]
and therefore, by \eqref{eq.var.lower.1},
\[
\begin{split}
\|L^+\|_{\mathcal D^1}
+\|U^-\|_{\mathcal D^1}
+Var_1(L,U)
&\leq
2\|X\|_{\mathcal D^1}
+\mathbb E|A|_T\leq
2\|X\|_{\underline H^1}.
\end{split}
\]
This proves \eqref{eq.lower.equiv.1}. 
\end{proof}

\begin{proposition}
\label{lem:lsc-Hp}
Let $(Y^n)_{n\geq1}$ be a sequence of c\`adl\`ag
semimartingales with  canonical decompositions $Y^n=Y^n_0+R^n+M^n$.
Assume that 
\[
Y^n_t\to Y_t,\quad \text{for every } t\in [0,T],\, \mathbb P\text{-a.s.}
\]
for some  c\`adl\`ag process $Y$ of class (D).
\begin{enumerate}
\item[(i)] Suppose that $p>1$,  
\[
\sup_{n\geq1}\left(\mathbb E|R^n|_T^p+\mathbb E\sup_{t\le T}|Y^n_t|^p\right)<\infty,
\]
and for any $t\in [0,T]$, $\mathbb E|Y^n_t-Y_t|^p\to0$. 
Then $Y$ is a semimartingale with the canonical decomposition $Y=Y_0+R+M$,
and 
\[
\big(\mathbb E|R|_T^p\big)^{1/p}
\leq
\liminf_{n\to\infty}
\big(\mathbb E|R^n|_T^p\big)^{1/p}.
\]
\item[(ii)] Suppose that  each $Y^n$ is of class (D),
\[
\sup_{n\geq1}\mathbb E|R^n|_T<\infty,
\]
and for any $\tau\in\mathcal T$, $\mathbb E|Y^n_\tau-Y_\tau|\to 0$ as $n\to \infty$.
Then $Y$ is a semimartingale with the canonical decomposition $Y=Y_0+R+M$, and 
\[
\mathbb E|R|_T
\leq
\liminf_{n\to\infty}
\mathbb E|R^n|_T.
\]
\end{enumerate}
\end{proposition}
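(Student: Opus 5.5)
We reduce the statement to a lower-semicontinuity statement for the predictable parts, obtained by weak compactness and Mazur's lemma. The Barlow--Protter route of uniform $\mathcal S^1$ convergence is not available here: the hypotheses give only pointwise (in $t$, or in $\tau$) convergence of $Y^n$ to $Y$, together with norm convergence at fixed times or at stopping times. So we work through the structure of $R^n$ directly.

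\textbf{Step 1: limit candidate for $R$.} In case (i), $(R^n_T)$ and the total variations $|R^n|_T$ are bounded in $L^p$, which is reflexive. Set $M^n:=Y^n-Y^n_0-R^n$. Then $\sup_n\mathbb E\sup_t|M^n_t|^p<\infty$, so each $M^n$ is a true martingale bounded in $\mathcal S^p$. Fix a countable dense set $D\subset[0,T]$ containing $T$. By a diagonal argument we pass to a subsequence along which $R^n_t\to \tilde R_t$ weakly in $L^p$ for all $t\in D$, and $|R^n|_T\to V$ weakly. We may also arrange that $\liminf$ becomes a limit of $(\mathbb E|R^n|_T^p)^{1/p}$ along this subsequence.

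By Mazur's lemma, forward convex combinations $\bar R^n\in\mathrm{conv}(R^n,R^{n+1},\dots)$ converge strongly in $L^p$ at each $t\in D$. This is obtained by a diagonal choice over $D$, or more cleanly by applying Mazur to the $\ell^2$-weighted sequence $\sum_k 2^{-k}R^n_{t_k}$ in a Hilbert/Banach direct sum. The corresponding convex combinations $\bar Y^n$ still converge to $Y$ at each $t$ in $L^p$, since $Y^n_t\to Y_t$ in $L^p$. Hence $\bar M^n_t\to Y_t-Y_0-\tilde R_t=:\tilde M_t$ in $L^p$, and $\tilde M$ is a martingale on $D$.

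\textbf{Step 2: identify a c\`adl\`ag predictable finite-variation $R$.} Convexity of total variation gives $|\bar R^n|_T\le$ the corresponding convex combination of the $|R^n|_T$. Hence $\mathbb E\,\mathrm{Var}_D(\tilde R)^p\le\liminf_n\mathbb E|\bar R^n|_T^p\le\liminf_n\mathbb E|R^n|_T^p$, where the first inequality uses Fatou along an a.s. subsequence on finite subsets of $D$ and monotone convergence over finite subsets.

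Let $M$ be the c\`adl\`ag version of the martingale $\tilde M$, and define $R:=Y-Y_0-M$, which is c\`adl\`ag and agrees with $\tilde R$ on $D$. Then $|R|_T=\mathrm{Var}_D(\tilde R)$ by right-continuity, which gives the norm inequality.

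Predictability requires separate care. Each $R^n_t$ is the value of a predictable process, and on $D$ we have $\tilde R$ equal to a strong $L^p$ limit of predictable finite-variation processes. I would show that $R$ equals the dual predictable projection of itself. A cleaner route is to note that $R$ is a special semimartingale part whose compensator is $R$: for every bounded martingale $N$, $\mathbb E[N_T R_T]=\mathbb E\int_0^T N_{s-}\,dR_s$. This identity holds for each $\bar R^n$ by predictability, and passes to the limit via Riemann sums on $D$ and uniform boundedness of $N$. The characterization of predictable finite-variation processes by this identity (Dellacherie--Meyer) then gives predictability.

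\textbf{Case (ii).} We replace $L^p$-weak compactness by the Koml\'os theorem, or by Ces\`aro convex combinations, in $L^1$. The key input is class (D) convergence, $\mathbb E|Y^n_\tau-Y_\tau|\to0$ for stopping times. This gives uniform integrability of the martingale parts and, via the Doob--Meyer decomposition of $Y$, identifies $R$ as the predictable finite-variation part.

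I expect the main obstacle to be predictability together with the passage from convergence on $D$ to a genuine c\`adl\`ag finite-variation process. In case (ii) there is the additional difficulty that $L^1$-boundedness of $R^n_t$ does not give weak compactness. There I would first try Koml\'os' theorem applied to $|R^n|_T$ and to $R^n_t$, $t\in D$, which yields Ces\`aro a.s. limits. The class (D) hypothesis then supplies the needed uniform integrability of $\bar M^n_\tau=\bar Y^n_\tau-\bar Y^n_0-\bar R^n_\tau$, at least after localization by the stopping times $\inf\{t:|\bar R^n|_t\ge k\}$.
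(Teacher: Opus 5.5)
\textbf{Case (i): predictability of $R$ is not established.} Your construction of $M$ and $R=Y-Y_0-M$ is sound, and so is the bound $\mathbb E|R|_T^p\le\liminf_n\mathbb E|R^n|_T^p$. These parallel the paper, with Mazur at countably many times in place of Banach--Saks on $M^n_T$. The gap is proving that $R$ is predictable, i.e.\ that $Y=Y_0+R+M$ is the \emph{canonical} decomposition. Passing to the limit in $\mathbb E[N_T\bar R^n_T]=\mathbb E\int_0^TN_{s-}\,d\bar R^n_s$ by Riemann sums does not work. The error $\mathbb E\int_0^T|N_{s-}-N^\pi_s|\,d|\bar R^n|_s$ is small for each fixed $n$, but not uniformly in $n$, because $d|\bar R^n|$ may charge short intervals just to the right of jumps of $N$.

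In fact no argument that uses only convergence at the times of a countable set $D$ can succeed. Let $\sigma$ be the first jump of a Poisson process with intensity $\lambda$, and set $Y^n=R^n:=\mathbf 1_{[\![\sigma+1/n,T]\!]}$, $M^n=0$. Each $R^n$ is predictable, since $\sigma+1/n$ is announced by $\sigma+(1-1/k)/n$. For each fixed $t$, $Y^n_t\to\mathbf 1_{\{\sigma\le t\}}$ a.s.\ and in every $L^p$, and all the bounds hold. Your construction then returns $M=0$ and $R=\mathbf 1_{[\![\sigma,T]\!]}$, which jumps at a totally inaccessible time. Take $N$ to be the compensated Poisson martingale stopped at $\sigma$. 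Then $\mathbb E\int_0^TN_{s-}\,d\bar R^n_s\to\mathbb E[N_\sigma;\sigma\le T]$, whereas $\mathbb E\int_0^TN_{s-}\,dR_s=\mathbb E[N_{\sigma-};\sigma\le T]$, and these differ by $\mathbb P(\sigma\le T)$. Under a reading with $t$-dependent null sets the statement itself fails here. The canonical predictable part of $Y$ is $\lambda(t\wedge\sigma)$, and $\mathbb E(\lambda(T\wedge\sigma))^2\to2>1\ge\liminf_n\mathbb E|R^n|_T^2$ as $\lambda T\to\infty$.

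What excludes this example is that $Y^n_t\to Y_t$ a.s.\ simultaneously for all $t$ (it fails at $t=\sigma$), and your argument never uses this. The repair is the paper's argument:
\begin{itemize}
\item $\bar M^n_T\to M_T$ in $L^p$ gives $\bar M^n\to M$ in $\mathcal S^p$ by Doob, hence uniformly a.s.\ along a subsequence.
\item Combined with pathwise convergence of $\bar Y^n_t$ for all $t$, this gives $\bar R^n_t(\omega)\to R_t(\omega)$ for all $t$ off one null set.
\item So $R$ is a pointwise limit of predictable processes, hence predictable.
\end{itemize}

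\textbf{Case (ii): the proposal is a plan, not a proof.} You concede the lack of $L^1$ compactness. Koml\'os gives Ces\`aro limits only on $D$, with the same predictability problem and no control of a c\`adl\`ag finite-variation limit. Appealing to ``the Doob--Meyer decomposition of $Y$'' presupposes that $Y$ is a super- or quasimartingale, which is exactly what must be shown.

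The missing idea is Rao's theorem. Since $M^n$ is a class (D) local martingale, it is a uniformly integrable martingale. Hence for every stopping-time partition, $\sum_i\mathbb E|\mathbb E(Y^n_{\tau_{i+1}}-Y^n_{\tau_i}|\mathcal F_{\tau_i})|\le\mathbb E|R^n|_T$. The assumed $L^1$-convergence at stopping times lets you pass to the limit, so Rao's mean variation satisfies $\mathrm{Var}(Y)\le\liminf_n\mathbb E|R^n|_T$. Rao's decomposition theorem then gives at once that $Y$ is a special semimartingale with predictable $R$ and $\mathbb E|R|_T\le\mathrm{Var}(Y)$. This bypasses both compactness and predictability. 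Convergence at stopping times is precisely what rules out the example above: take $\tau=\sigma\wedge T$.
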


\begin{proof}
(i) By the assumptions made
\[
\sup_{n\geq1}\mathbb E|M^n_T|^p<\infty.
\]
Since $p>1$, the space $L^p(\mathcal F_T)$ is
reflexive. Hence, after passing to a subsequence, there exists
$\eta\in L^p$ such that
\[
M^n_T\rightharpoonup\eta
\qquad\text{weakly in }L^p.
\]
Let $M$ be a c\`adl\`ag version of the martingale $\tilde M$ given by 
\[
\tilde M_t:=\mathbb E(\eta|\mathcal F_t),\qquad t\in[0,T].
\]
Since conditional expectation is a bounded linear operator on $L^p$,
for every $t\in[0,T]$,
\[
M^n_t
=
\mathbb E(M^n_T\mid\mathcal F_t)
\rightharpoonup
M_t
\qquad\text{weakly in }L^p.
\]
Furthermore, $M_0=0$. For $t\in[0,T]$, define
\[
R_t:=-Y_0-M_t+Y_t.
\]
Then $R$ is adapted and c\`adl\`ag, $R_0=0$, and
\[
Y_t=Y_0+R_t+M_t.
\]
Since $Y^n_t\to Y_t$ strongly in $L^p$ and $Y^n_0\to Y_0$ strongly
in $L^p$, it follows that, for every $t\in[0,T]$,
\[
R^n_t
=
-Y^n_0-M^n_t+Y^n_t
\rightharpoonup
R_t
\qquad\text{weakly in }L^p.
\]
By the Banach-Saks theorem, after passing to a subsequence, $n^{-1}\sum_{i=1}^nM^i_T\to M_T$
in $L^p$. Thus, by Doob's $L^p$ inequality   $n^{-1}\sum_{i=1}^nM^i\to M$ in $\mathcal S^p$, which combined with
the assumed convergence of $(Y^n)$ yields   $n^{-1}\sum_{i=1}^nR^i_t\to R_t,\, t\in [0,T]$. As a result $R$ is predictable.
Let $\pi=\{0=t_0<t_1<\cdots<t_k=T\}$
be a finite partition of $[0,T]$. Then
\[
\big(
R^n_{t_1}-R^n_{t_0},\ldots,
R^n_{t_k}-R^n_{t_{k-1}}
\big)
\rightharpoonup
\big(
R_{t_1}-R_{t_0},\ldots,
R_{t_k}-R_{t_{k-1}}
\big)
\]
weakly in $(L^p)^k$. The functional
\[
(L^p)^k\ni(X_1,\ldots,X_k)
\longmapsto
\left\|
\sum_{i=1}^k|X_i|
\right\|_{L^p}
\]
is convex and norm-continuous, and hence weakly lower
semicontinuous. Consequently,
\[
\left\|
\sum_{i=1}^k|R_{t_i}-R_{t_{i-1}}|
\right\|_{L^p}
\leq
\liminf_{n\to\infty}
\left\|
\sum_{i=1}^k|R^n_{t_i}-R^n_{t_{i-1}}|
\right\|_{L^p}.
\]
Since $\sum_{i=1}^k|R^n_{t_i}-R^n_{t_{i-1}}|\leq |R^n|_T$,
we obtain
\[
\left\|
\sum_{i=1}^k|R_{t_i}-R_{t_{i-1}}|
\right\|_{L^p}
\leq
\liminf_{n\to\infty}\|\,|R^n|_T\,\|_{L^p}.
\]
Now let $(\pi_j)_{j\geq1}$ be an increasing sequence of finite
partitions $\pi_j=\{0=t^j_0<t^j_1<\cdots<t^j_{k_j}=T\}$
whose union is dense in $[0,T]$.
Since $R$ is c\`adl\`ag,
\[
|R|_T
=
\sup_{j\geq1}
\sum_{i=1}^{k_j}|R_{t^j_i}-R_{t^j_{i-1}}|.
\]
Thus, by monotone convergence,
\[
\begin{split}
\big(\mathbb E|R|_T^p\big)^{1/p}
&=
\lim_{j\to\infty}
\left\|\sum_{i=1}^{k_j}|R_{t^j_i}-R_{t^j_{i-1}}|\right\|_{L^p}\leq
\liminf_{n\to\infty}
\big(\mathbb E|R^n|_T^p\big)^{1/p}.
\end{split}
\]

(ii) Since each $Y^n$ is of class (D) and $\mathbb E|R^n|_T<\infty$, the process
\[
M^n=Y^n-Y^n_0-R^n
\]
is a local martingale of class (D), and hence a uniformly integrable
martingale.
Let  $\pi=\{0=\tau_0\leq\tau_1\leq\cdots\leq\tau_m=T\}$
be  a random partition consisting of stopping times.
Since $M^n$ is a martingale,
\[
\begin{split}
\sum_{i=0}^{m-1}
\mathbb E\left|
\mathbb E\left(
Y^n_{\tau_{i+1}}-Y^n_{\tau_i}
|\mathcal F_{\tau_i}
\right)
\right|&=
\sum_{i=0}^{m-1}
\mathbb E\left|
\mathbb E\left(
R^n_{\tau_{i+1}}-R^n_{\tau_i}
\mid\mathcal F_{\tau_i}
\right)
\right|\leq
\sum_{i=0}^{m-1}
\mathbb E
\left|R^n_{\tau_{i+1}}-R^n_{\tau_i}\right|
\leq
\mathbb E|R^n|_T.
\end{split}
\]
By the assumptions made on the sequence $(Y^n)$, we have 
\[
\mathbb E\left(
Y^n_{\tau_{i+1}}-Y^n_{\tau_i}
\mid\mathcal F_{\tau_i}
\right)
\longrightarrow
\mathbb E\left(
Y_{\tau_{i+1}}-Y_{\tau_i}
\mid\mathcal F_{\tau_i}
\right)
\]
in $L^1$. Consequently,
\[
\sum_{i=0}^{m-1}\mathbb E\left|\mathbb E\left(Y_{\tau_{i+1}}-Y_{\tau_i}|\mathcal F_{\tau_i}\right)\right|\leq
\liminf_{n\to\infty}\mathbb E|R^n|_T.
\]
Taking the supremum over all $\pi$ as above yields
\[
Var(Y)
\leq
\liminf_{n\to\infty}\mathbb E|R^n|_T,
\]
where $Var(Y)$ is  Rao's mean variation, see \cite[Section III.4]{Protter}.
By Rao's decomposition theorem for quasimartingales, $Y$ is
a special semimartingale and $\mathbb E|R|_T\leq Var(Y)$.
\end{proof}

\end{document}